\newtheorem{thm}{Theorem}
\newtheorem{cor}[thm]{Corollary}
\newtheorem{theorem}{Theorem}[section]
\newtheorem{proposition}[theorem]{Proposition}
\newtheorem{lemma}[theorem]{Lemma}
\newcommand{\Z}{\mathbb{Z}}
\newcommand{\N}{\mathbb{N}}
\newcommand{\Q}{\mathbb{Q}}
\newcommand{\sign}{\text{sign }}
\newcommand{\im}{\text{Im}}
\newcommand*{\longhookrightarrow}{\ensuremath{\lhook\joinrel\relbar\joinrel\rightarrow}}
\def\co{\colon\thinspace}
\newtheorem*{rep@theorem}{\rep@title}
\newcommand{\newreptheorem}[2]{%
\newenvironment{rep#1}[1]{%
 \def\rep@title{#2 \ref{##1}}%
 \begin{rep@theorem}}%
 {\end{rep@theorem}}}
\begin{document}
\makeatletter
\providecommand\@dotsep{5}
\makeatother
\rhead{\thepage}
\lhead{\author}
\thispagestyle{empty}


\raggedbottom
\pagenumbering{arabic}
\setcounter{section}{0}


\title{Knot concordance and homology sphere groups}
\author{Paolo Aceto \and Kyle Larson}
\address{Alfr\'ed R\'enyi Institute of Mathematics, Budapest, Hungary}
\email{aceto.paolo@renyi.mta.hu}
\address{Michigan State University, East Lansing, Michigan}
\email{larson@math.msu.edu}

\begin{abstract}
We study two homomorphisms to the rational homology sphere group $\Theta^3_\mathbb{Q}$. 
If $\psi$ denotes the inclusion homomorphism from the integral homology sphere group $\Theta^3_\mathbb{Z}$, then
using work of Lisca we show that the image of $\psi$ intersects trivially with the subgroup of 
$\Theta^3_\mathbb{Q}$ generated by lens spaces. As corollaries this gives a new proof that the cokernel of $\psi$ is infinitely generated, and implies that a connected sum $K$ of 2-bridge knots is concordant to a 
knot with determinant 1 if and only if $K$ is smoothly slice.
Furthermore, if $\beta$ denotes the homomorphism from the knot concordance group $\mathcal{C}$ defined by 
taking double branched covers of knots, 
we prove that the kernel of $\beta$ contains a $\mathbb{Z}^{\infty}$ summand by analyzing  the Tristram-Levine signatures of a family of knots whose double branched covers all bound rational homology balls. 
\end{abstract}
\maketitle


\begin{section}{Introduction}

Some of the most interesting objects in low-dimensional topology are the groups formed from the set of knots under the equivalence of smooth (or topological) concordance, and from the 
set of $R$-homology 
3--spheres under the equivalence of $R$-homology cobordism (for some ring $R$). While progress has been made in answering certain questions about these groups, we remain quite far from a 
satisfactory understanding 
of their structure. Here we focus on the smooth knot concordance group $\mathcal{C}$, the integral homology sphere group $\Theta^3_\mathbb{Z}$, and the rational homology sphere group $\Theta^3_\mathbb{Q}$
(see Section \ref{background} for precise definitions). 

The double branched cover of a knot in $S^3$ is a rational homology sphere, and in fact this operation gives a homomorphism $\beta \co \mathcal{C} \rightarrow \Theta^3_\mathbb{Q}$. 
On the other hand, since a $\mathbb{Z}$-homology sphere (respectively a $\mathbb{Z}$-homology cobordism) is also a $\mathbb{Q}$-homology sphere (respectively a $\mathbb{Q}$-homology cobordism), 
there is an obvious homomorphism $\psi \co \Theta^3_\mathbb{Z} \rightarrow \Theta^3_\mathbb{Q}$ induced by inclusion. In this paper we consider properties of the kernel, image, and cokernel of these two homomorphisms.

In \cite{Lisca2} (following \cite{Lisca1}), Lisca gives a complete description of the subgroup $\mathcal{L}$ of $\Theta^3_\mathbb{Q}$ generated by lens spaces. The image of $\psi$ and $\mathcal{L}$ are special 
subgroups in $\Theta^3_\mathbb{Q}$, and so it is natural to study their intersection.

 \begin{thm}\label{thm:thm3}
 $\psi(\Theta^3_\mathbb{Z})\cap\mathcal{L}=0$.
 \end{thm}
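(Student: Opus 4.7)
The plan is to translate the hypothesis into an intersection-form embedding statement, then invoke Lisca's classification. Suppose $[Z]$ lies in $\psi(\Theta^3_\mathbb{Z})\cap\mathcal{L}$. Then there exist an integer homology sphere $Y$ and a connected sum of lens spaces $L=\#_{i=1}^n L(p_i,q_i)$ such that $L\#(-Y)$ bounds a smooth rational homology ball $W$. After reorienting individual lens space summands as needed, we may assume $L$ bounds its canonical negative-definite Hirzebruch-Jung plumbing $P$, with intersection form $Q_P$. It then suffices to show that $L$ itself bounds a rational homology ball, since this gives $[Z]=[L]=0$ in $\Theta^3_\mathbb{Q}$.

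The next step is to produce a closed, smooth, negative-definite 4-manifold containing $P$. Choose a smooth 4-manifold $N$ with $\partial N=-Y$ having negative-definite intersection form; since $Y$ is a $\mathbb{Z}$-homology sphere, $Q_N$ is automatically unimodular. Form
\[
X\ :=\ (P\natural N)\cup_{L\#(-Y)}(-W).
\]
A Mayer-Vietoris calculation, using that $L\#(-Y)$ is a rational homology sphere and $-W$ a rational homology ball, identifies the rational intersection form of $X$ with $Q_P\oplus Q_N$, which is negative definite. Donaldson's diagonalization theorem then yields an embedding $Q_P\oplus Q_N\hookrightarrow\langle -1\rangle^{b_2(X)}$ over $\mathbb{Z}$, which restricts to an embedding of $Q_P$ alone into the standard diagonal negative-definite lattice.

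The final step is to invoke Lisca's theorem from \cite{Lisca1}: for the plumbing lattice $Q_P$ associated with a connected sum of lens spaces, the existence of such a lattice embedding into the standard diagonal negative-definite form is equivalent to $L$ bounding a smooth rational homology ball. This yields $[L]=0$ in $\Theta^3_\mathbb{Q}$, and hence $[Z]=0$, completing the argument.

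The main obstacle will be ensuring negative-definiteness of $X$, which amounts to finding a negative-definite smooth filling of $-Y$. A given orientation of a $\mathbb{Z}$-homology sphere need not bound a negative-definite 4-manifold (the Ozsv\'ath-Szab\'o $d$-invariant obstructs this when it is negative), but one has the freedom to replace $Y$ by $-Y$, which flips the sign of $d$ at the cost of reorienting $L$, and this flexibility is expected to eliminate the obstruction in the relevant cases. In residual cases one must sharpen the argument, for example by feeding the existence of $W$ directly into the Donaldson-style setup used by Lisca and bypassing the intermediate construction of $N$. Verifying that the abstractly produced lattice embedding of $Q_P$ falls within the scope of Lisca's classification, rather than only those embeddings witnessed by explicit ribbon disks, is a secondary concern.
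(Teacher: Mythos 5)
Your overall strategy---cap off the plumbing, apply Donaldson, then invoke Lisca---is the right one, but there are two genuine gaps, one of which you flag but do not close. First, the 4--manifold $N$ with $\partial N=-Y$ and negative definite intersection form need not exist: since $Y$ is an integral homology sphere, any filling has unimodular intersection form, and the $d$-invariant inequality rules out negative definite fillings whenever $d(-Y)<0$; moreover $d\geq 0$ is only necessary, not sufficient, so the ``replace $Y$ by $-Y$'' escape route does not eliminate the obstruction. The paper sidesteps this entirely by never choosing an auxiliary filling: it takes $X:=P\cup_\partial W$ (with $W$ the rational homology cobordism, so $\partial X=Y$) and $X^*:=P^*\cup_\partial(-W)$, where $P^*$ is the canonical plumbing for the reversed-orientation connected sum; the closed manifold $X\cup_\partial X^*$ is automatically negative definite. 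This is exactly the ``feed $W$ directly into the setup'' fix you gesture at in your last paragraph, but it is the whole proof, not a residual case.

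Second, the appeal to Lisca is wrong as stated. Lisca's obstruction requires an embedding of the plumbing lattice into the standard negative definite lattice \emph{of the same rank}; an embedding into $\langle-1\rangle^{b_2(X)}$ with $b_2(X)=b_2(P)+b_2(N)$ is strictly weaker (e.g.\ the lattice $\langle-2\rangle$ of $L(2,1)=\mathbb{RP}^3$ embeds into $\langle-1\rangle^2$, yet $\mathbb{RP}^3$ bounds no rational homology ball). This is repairable: since $Q_N$ is unimodular, it splits off as an orthogonal summand of the standard lattice (the paper's Lemma \ref{unimodular}), leaving $Q_P$ embedded in a standard lattice of rank $b_2(P)$; in the paper the same-rank condition is automatic because $W$ is a rational homology cobordism, so $b_2(X)=b_2(P)$. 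Finally, for connected sums of more than one lens space the embedding of $Q_P$ alone is not known to be equivalent to bounding a rational ball: one also needs the same-rank embedding of the \emph{dual} lattice (which you never produce---it requires running the argument with both orientations), and even then Lisca's result (Lemma \ref{Lisca}) only guarantees that one or two summands bound a rational homology ball, forcing the iterative removal of summands carried out at the end of the paper's proof rather than the one-shot equivalence you invoke.
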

  
In other words, if a connected sum of lens spaces is rational homology cobordant to any integral homology sphere, then it is rationally homology cobordant to $S^3$ and so bounds a rational homology ball.
Conversely, if an integral homology sphere $Y$ is rational homology cobordant to a connected sum of lens spaces, then $Y$ bounds a rational homology ball.

Since the image of $\psi$ intersects trivially with $\mathcal{L}$, we get that $\mathcal{L}$ injects into the cokernel of $\psi$. A simple corollary of Lisca's work is that $\mathcal{L}$ is infinitely generated, 
and so this gives a new proof that the cokernel of $\psi$ is infinitely generated (the first proof appears in \cite{KL} following earlier work of \cite{HLR}). 

\begin{cor}
The cokernel of $\psi$ is infinitely generated. 
\end{cor}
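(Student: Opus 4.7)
The plan is to deduce the corollary directly from Theorem \ref{thm:thm3} together with a short argument that the lens space subgroup $\mathcal{L} \subseteq \Theta^3_\mathbb{Q}$ is itself infinitely generated. First I would observe that Theorem \ref{thm:thm3} says $\psi(\Theta^3_\mathbb{Z}) \cap \mathcal{L} = 0$, so the composition
\[
\mathcal{L} \hookrightarrow \Theta^3_\mathbb{Q} \twoheadrightarrow \Theta^3_\mathbb{Q}/\psi(\Theta^3_\mathbb{Z}) = \operatorname{coker}(\psi)
\]
is injective. Hence any infinite generating set for $\mathcal{L}$ forces the cokernel of $\psi$ to be infinitely generated, and it suffices to exhibit an infinitely generated subgroup of $\mathcal{L}$.

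Second, I would invoke Lisca's classification of which connected sums of lens spaces bound rational homology balls to produce an explicit infinite family of lens spaces that are independent in $\Theta^3_\mathbb{Q}$. The most convenient route is to take a sequence $\{L(p_i,q_i)\}_{i \in \mathbb{N}}$ of lens spaces (for example with $p_i$ ranging over distinct primes, or more generally the family appearing in Lisca's obstruction) and verify, using Lisca's criterion, that no nontrivial integer linear combination $\sum n_i L(p_i,q_i)$ bounds a rational homology ball. This is precisely the kind of computation Lisca's list reduces to a combinatorial check on the continued fraction expansions of the $p_i/q_i$: the hypotheses of his theorem fail for any nonzero combination of such lens spaces, so they are $\mathbb{Z}$-linearly independent in $\mathcal{L} \subseteq \Theta^3_\mathbb{Q}$.

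Combining these two steps, $\mathcal{L}$ contains a free abelian group of infinite rank, which injects into $\operatorname{coker}(\psi)$ by the first paragraph, and therefore $\operatorname{coker}(\psi)$ is infinitely generated.

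The only substantive step is extracting the infinite independent family of lens spaces from Lisca's work; the deduction from Theorem \ref{thm:thm3} is then immediate. Since Lisca's classification is phrased in terms of an explicit list of ``standard'' linear combinations that bound rational balls, the independence claim reduces to checking that the chosen family lies outside this list, and I expect this to be the only point requiring any care.
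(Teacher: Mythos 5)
Your proposal is correct and follows exactly the paper's argument: Theorem \ref{thm:thm3} gives that $\mathcal{L}$ injects into $\operatorname{coker}(\psi)$, and the infinite generation of $\mathcal{L}$ (a simple consequence of Lisca's classification) then yields the result. No substantive differences from the paper's proof.
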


Furthermore, since lens spaces arise as double branched covers of $2$-bridge knots or links, Theorem \ref{thm:thm3} yields the following corollary.

\begin{cor}\label{cor:2bridge}
 Let $K\subset S^3$ be a connected sum of $2$-bridge knots. 
 Assume that $K$ is smoothly concordant to some knot $J$ with $det(J)=1$. Then $K$ is smoothly slice.
\end{cor}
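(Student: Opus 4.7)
The plan is to apply Theorem \ref{thm:thm3} through the double-branched-cover homomorphism $\beta \co \mathcal{C} \to \Theta^3_\mathbb{Q}$. Since $\beta$ is a homomorphism, $K$ being concordant to $J$ immediately gives $\beta(K) = \beta(J)$ in $\Theta^3_\mathbb{Q}$. I would then verify that this common class lies in both subgroups appearing in Theorem \ref{thm:thm3}. On the $K$ side: the double branched cover of a 2-bridge knot is a lens space and $\Sigma_2$ distributes over connected sum, so $\Sigma_2(K)$ is a connected sum of lens spaces and $\beta(K) \in \mathcal{L}$. On the $J$ side: the hypothesis $\det(J) = 1$ means $|H_1(\Sigma_2(J); \Z)| = 1$, so $\Sigma_2(J)$ is an integral homology sphere and $\beta(J) \in \psi(\Theta^3_\Z)$. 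Applying Theorem \ref{thm:thm3} then forces $\beta(K) = 0$, i.e., $\Sigma_2(K)$ bounds a smooth rational homology $4$-ball.

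To finish, I would invoke Lisca's result from \cite{Lisca2} that a connected sum of 2-bridge knots is smoothly slice if and only if its double branched cover bounds a smooth rational homology $4$-ball. Combined with the previous paragraph, this yields the corollary directly.

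The main obstacle is really imported from \cite{Lisca2}: the implication ``$\Sigma_2(K)$ bounds a rational homology ball $\Rightarrow$ $K$ is slice'' for connected sums of 2-bridge knots rests on Lisca's delicate obstruction-based classification of which connected sums of lens spaces bound rational homology balls, together with his explicit construction of slice disks in exactly those cases. For general knots this implication can fail, so one genuinely needs the 2-bridge hypothesis. Once this classification is granted, the corollary follows from Theorem \ref{thm:thm3} by the short chain of identifications above.
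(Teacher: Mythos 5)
Your proposal is correct and matches the paper's argument essentially step for step: both pass to double branched covers (which turns concordance into rational homology cobordism), identify $\Sigma_2(K)$ as a connected sum of lens spaces and $\Sigma_2(J)$ as an integral homology sphere via $\det(J)=1$, apply Theorem \ref{thm:thm3} to conclude $\Sigma_2(K)$ bounds a rational homology ball, and finish with Lisca's classification to get sliceness of $K$. The only cosmetic difference is that you phrase the middle step through the subgroups $\mathcal{L}$ and $\psi(\Theta^3_\mathbb{Z})$ while the paper invokes the equivalent cobordism formulation of Theorem \ref{main}.
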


The above result may be relevant in understanding the topological concordance classes of 2-bridge knots.
It is not known whether there exists a 2-bridge (or even alternating) knot which is topologically but not smoothly slice.
One way to exhibit such an example is to find a non-slice 2-bridge knot $K$ that is smoothly concordant to some knot $J$ with trivial
Alexander polynomial. By the work of Freedman (\cite{Freed1}, \cite{Freed2}) $J$ (and thus $K$) is topologically slice. 
Corollary \ref{cor:2bridge} shows that there is no such example.

Fintushel and Stern \cite{FS} showed that the Brieskorn homology sphere $\Sigma(2,3,7)$ bounds a rational homology ball, and since $\Sigma(2,3,7)$ has infinite order in $\Theta^3_\mathbb{Z}$ we get that the kernel 
of $\psi$ is infinite. In fact the argument in \cite{FS} can be modified to show that the figure-eight knot $E$ is rationally slice, that is, $E$ bounds a smoothly embedded disk in a rational homology ball bounded 
by $S^3$ (see Cha \cite{Cha}). It then follows that the manifolds $E_n = S^3_{1/n}(E)$ defined by $1/n$-surgery on $E$ all bound rational homology balls (see Proposition \ref{balls}) and hence are in the kernel of $\psi$ 
(note that $\Sigma(2,3,7) = E_1$). The integral homology sphere $E_n$ has nontrivial Rokhlin invariant when $n$ is odd, and hence this subfamily is nontrivial in $\Theta^3_\mathbb{Z}$.

Each of these homology spheres $E_n$ can also be obtained by taking the double branched cover of a knot, and so these knots belong to the kernel of $\beta$. 
By studying the Tristram-Levine signatures of the family $\{K_n\}_{n\geq0}$ (see Figure \ref{Knstretch}) whose double branch covers are $E_{2n+1}$, we show the following.

\begin{thm}\label{thm:thm1}
Infinitely many of the knots $\{K_n\}$ are linearly independent in $\mathcal{C}$.
\end{thm}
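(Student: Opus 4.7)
The plan is to exploit the Tristram--Levine signatures, which provide a family of concordance homomorphisms $\sigma_\omega \co \mathcal{C}\to\mathbb{Z}$ for every $\omega\in S^1$ that is not a zero of the relevant Alexander polynomials. To prove that an infinite subfamily of $\{K_n\}$ is linearly independent in $\mathcal{C}$, it suffices to extract a subsequence $n_1<n_2<\cdots$ together with points $\omega_1,\omega_2,\ldots\in S^1$ such that the matrix $\bigl[\sigma_{\omega_j}(K_{n_i})\bigr]$ is (lower or upper) triangular with nonzero diagonal entries; any such family is automatically $\mathbb{Z}$-linearly independent, so the subgroup generated will be isomorphic to $\mathbb{Z}^\infty$.

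First, I would extract a Seifert surface and a Seifert matrix $V_n$ directly from the diagram in Figure \ref{Knstretch}. Since the double branched cover of $K_n$ is $E_{2n+1}=S^3_{1/(2n+1)}(E)$, the expected picture (via the Montesinos trick on the strongly invertible figure eight) is that $K_n$ contains a twist region whose twisting grows linearly with $n$, while the rest of the knot is fixed. Accordingly, $V_n$ should agree with a fixed base matrix outside a block whose size or entries depend in a simple controlled way on $n$, and the Alexander polynomial $\Delta_{K_n}(t)=\det(tV_n-V_n^T)$ and Hermitian form $H_n(\omega)=(1-\omega)V_n+(1-\bar\omega)V_n^T$ can then be analyzed uniformly in $n$.

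Second, I would study the jumps of $\sigma_{K_n}(\omega)=\mathrm{sign}\,H_n(\omega)$ as $\omega$ moves along the upper half of $S^1$. In twist families obtained by $1/n$-twisting one expects at least one root of $\Delta_{K_n}$ on $S^1$ to approach $1$ as $n\to\infty$, producing a signature jump at some $\omega_n$ with $\omega_n\to 1$. The key estimate to establish is the following triangularity statement: after passing to a subsequence $n_k\to\infty$ one can pick $\omega_{n_k}$ in small, pairwise disjoint arcs of $S^1$ accumulating at $1$, such that $\sigma_{K_{n_k}}(\omega_{n_k})\neq 0$ while $\sigma_{K_{n_j}}(\omega_{n_k})=0$ for every $j<k$ (equivalently, $\omega_{n_k}$ lies in a region where $\sigma_{K_{n_j}}$ has not yet left its value near $\omega=1$). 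This yields the triangular matrix, hence the theorem.

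The main obstacle is the explicit computation of the signature function of $K_n$ and the verification of the vanishing/nonvanishing pattern above. Handling the Hermitian form $H_n(\omega)$ whose size grows with $n$ is delicate; I expect the most efficient route is a recursive argument comparing $H_{n+1}$ with $H_n$ via the fact that adding a full twist in the twist region modifies $V_n$ by a stabilization-like operation, so that the signature function can be tracked inductively in $n$. An alternative is to apply the Gordon--Litherland formula to an unoriented spanning surface which changes predictably with $n$; either route reduces the problem to verifying that the first signature jump of $K_n$ (counted from $\omega=1$) occurs strictly closer to $1$ than for any $K_m$ with $m<n$, which is precisely the input needed for the triangular matrix argument.
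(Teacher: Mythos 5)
Your overall strategy is the same as the paper's: use the Tristram--Levine homomorphisms $\sigma_\omega$ and produce a sequence of evaluation points at which the signatures of the $K_n$ separate in a triangular pattern. The paper also exploits exactly the structural feature you predict: the Seifert surface from Figure \ref{Knstretch} has fixed genus $7$ for all $n$, and only a $2\times 2$ block of the Seifert matrix depends on $n$, so only two eigenvalues $\mu_1(\omega,n),\mu_2(\omega,n)$ of the Hermitian form vary with $n$. The paper computes their product as a ratio of determinants and then uses the skein relation at the twist region to reduce everything to the explicit function $F(\omega,n)=2(\mathrm{Re}(\omega)-1)\bigl(1-n+n\,\Delta_{K_1}(\omega^{1/2})/\Delta_{K_0}(\omega^{1/2})\bigr)$, whose sign controls whether $\sigma_\omega(K_n)$ equals $\sigma_\omega(K_0)$ or $\sigma_\omega(K_0)\pm 2$ (Lemmas \ref{lemma1} and \ref{lemma2}). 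This is essentially the ``recursive comparison'' you sketch, carried out in closed form.

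That said, two of your concrete predictions do not survive contact with the actual computation, and the second one affects the logic of your endgame. First, the separating points do not accumulate at $\omega=1$: the relevant critical locus is where $\Delta_{K_1}/\Delta_{K_0}$ crosses the values $1-\tfrac1n$, which for this family happens near $\omega=-1$ (the paper's $\omega_l\to -1$). Second, and more importantly, near $\omega=-1$ one has $\sigma_{\omega}(K_m)\in\{\sigma_\omega(T_{3,7}),\,\sigma_\omega(T_{3,7})\pm2\}$ with $\sigma_\omega(T_{3,7})\neq 0$, so your required vanishing $\sigma_{\omega_{n_k}}(K_{n_j})=0$ for $j<k$ is simply false; the matrix $[\sigma_{\omega_j}(K_{n_i})]$ is not triangular with zeros below (or above) the diagonal. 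What is ``triangular'' is the difference $\sigma_{\omega_l}(K_m)-\sigma_{\omega_l}(K_0)$, which is $\pm2$ for $m\geq l$ and $0$ for $l_0<m<l$. To conclude linear independence from this you need an extra step: the paper first evaluates at some $\omega_h$ with $h$ larger than all indices appearing in a putative slice linear combination to deduce $\sum_i\alpha_i=0$ (using $\sigma_{\omega_h}(K_0)\neq 0$ for torus knots), and only then evaluates at $\omega_{l_L}$ to isolate the top coefficient. So your plan is repairable, but as written the key ``triangular with nonzero diagonal, zero off-diagonal'' matrix does not exist for this family, and the verification you defer to the end is precisely where all the content of the theorem lies.
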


As an immediate corollary we get that the kernel of $\beta$ is infinitely generated. Note that this also follows from combining classical work of 
Casson and Harer \cite{CH} and Litherland \cite{Lith}. In particular, in \cite{CH} it was shown that the family of Brieskorn 
spheres $\Sigma(2,2s-1,2s+1)$ with $s$ odd all bound contractible 4-manifolds, 
and hence are trivial in $\Theta^3_\mathbb{Z}$ and $\Theta^3_\mathbb{Q}$. These manifolds are double branched covers of the torus knots
$T_{2s-1,2s+1}$, and in \cite{Lith} it was shown that torus knots are linearly independent in $\mathcal{C}$.

In fact, a closer inspection of the proof of Theorem \ref{thm:thm1} yields a direct proof of the following slightly stronger result.

\begin{cor}\label{thm:thm2}
The kernel of $\beta$ contains a $\mathbb{Z}^{\infty}$ summand.
\end{cor}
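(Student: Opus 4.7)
The plan is to refine the proof of Theorem \ref{thm:thm1} so that the Tristram-Levine signature data used there produces not only linear independence but also an explicit splitting onto $\Z^{\infty}$. Each Tristram-Levine signature $\sigma_\omega\co\mathcal{C}\to\Z$ is a concordance homomorphism for generic $\omega\in S^1\setminus\{1\}$, and the function $\omega\mapsto\sigma_\omega(K)$ is a step function whose jumps occur only at roots of $\Delta_K(t)$ on the unit circle and are always even integers.

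First, I would inspect the signature computations used for Theorem \ref{thm:thm1} and, after passing to a subsequence $\{K_{n_i}\}$, select for each $i$ a value $\omega_i\in S^1$ which is a simple root of $\Delta_{K_{n_i}}(t)$ at which $\sigma_\omega(K_{n_i})$ jumps by exactly $\pm 2$, and which is not a root of $\Delta_{K_{n_j}}(t)$ for $j<i$. Define $\phi_i\co\mathcal{C}\to\Z$ to be one half the signature jump at $\omega_i$; this is an integer-valued concordance homomorphism. By construction, $\phi_i(K_{n_i})=\pm 1$ while $\phi_i(K_{n_j})=0$ for $j<i$, so the matrix $\bigl(\phi_i(K_{n_j})\bigr)_{i,j}$ is triangular with unit diagonal.

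Given this triangular structure, one can invert the system over $\Z$ to produce integer combinations $y_i=\sum_{j\le i}a_{ij}K_{n_j}$ satisfying $\phi_k(y_i)=\pm\delta_{ik}$. The homomorphism $\Phi:=(\phi_i)\co\mathcal{C}\to\bigoplus_i\Z$ is well-defined (each knot has only finitely many Alexander roots, so only finitely many $\phi_i$ are nonzero on any given knot) and admits, up to signs, the section $e_i\mapsto \pm y_i$. Because each $K_n$ lies in $\ker(\beta)$---its double branched cover $E_{2n+1}$ bounds a rational homology ball, as discussed in the introduction---each $y_i$ and hence the entire image of the section lies in $\ker(\beta)$. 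This image is a $\Z^{\infty}$ direct summand of $\ker(\beta)$, as required.

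The main technical obstacle is establishing that infinitely many of the $K_n$ admit a distinguished simple Alexander root with signature jump of exactly $\pm 2$, chosen so that these roots are distinct across a subsequence. This requires enough explicit control of the Seifert forms of the $K_n$ to identify a ``new'' signature jump of $K_{n_i}$ at $\omega_i$ that is not detected by the earlier members of the subsequence. Such control should follow from the Seifert matrix analysis already used in the proof of Theorem \ref{thm:thm1}, where the signatures of the $K_n$ are computed in sufficient detail for this refinement to be read off directly.
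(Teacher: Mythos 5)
Your proposal reaches the right conclusion and shares the paper's overall strategy --- detect the $K_n$ by a triangular family of integer-valued Tristram--Levine homomorphisms concentrated near $\omega=-1$, surject onto $\bigoplus\Z$, and split using freeness --- but the homomorphisms you use are genuinely different from the paper's, and the difference is worth noting. The paper takes the \emph{values} $\frac{1}{2}\sigma_{\omega_l}$ at regular points $\omega_l$ interlacing the jump locus (Lemma \ref{lemma2}), restricts to $\ker(\beta)$, and proves surjectivity by evaluating on $J_n:=K_n-K_0$ (which lie in $\ker(\beta)$ since $K_0=T_{3,7}$ does): Lemma \ref{lemma2} gives $\overline{\sigma}(J_n)=\pm(1,\dots,1,0,\dots)$ with constant sign, so consecutive differences hit the standard basis. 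You instead use half the signature \emph{jump} at a distinguished simple root $\omega_i$ of $\Delta_{K_{n_i}}$ and invert a unipotent triangular matrix over $\Z$. Your version has a concrete advantage: a knot has only finitely many Alexander roots, so your $\Phi$ manifestly lands in the direct sum $\bigoplus_i\Z$ on \emph{every} knot, whereas the paper does not address why $\overline{\sigma}$ of an arbitrary element of $\ker(\beta)$ has only finitely many nonzero coordinates (which is needed to invoke that the target is free abelian). The price is that your key technical input --- for infinitely many $n$, a simple root of $\Delta_{K_n}$ with jump exactly $\pm 2$, distinct across the family and not a root for the earlier knots --- is asserted rather than proved. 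It is in fact extractable from Lemmas \ref{lemma1} and \ref{lemma2}: with $g(t)=\Delta_{K_1}(e^{it/2})/\Delta_{K_0}(e^{it/2})$ monotone on $[\pi-\varepsilon,\pi]$ and $\Delta_{K_0}$ nonvanishing there, the parameter $t_m$ with $g(t_m)=1-\frac{1}{m}$ is the unique point in that window where $\Delta_{K_m}(e^{it/2})=0$; it is not a root for $K_{m'}$ with $m'\neq m$ because $1-m'+m'g(t_m)=1-\frac{m'}{m}\neq 0$; and $F(\cdot,m)$ changes sign there, so by Lemma \ref{lemma1} the jump is $\pm 2$. With that filled in your argument closes, modulo stating the last step explicitly: $\mathcal{C}=\im(s)\oplus\ker\Phi$ with $\im(s)\subseteq\ker(\beta)$ yields $\ker(\beta)=\im(s)\oplus\bigl(\ker\Phi\cap\ker(\beta)\bigr)$.
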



The work of Lisca \cite{Lisca2} shows that $\im(\beta)$ is infinitely generated by considering 
the lens spaces that are double branched covers of 2-bridge knots. 
This also follows from \cite{KL} and \cite{HLR}, but in fact their work shows more. As we point out in Section 
\ref{background}, the end result of these 
two papers is a family of rational homology spheres $\{M_n\}$ that are linearly independent in $\Theta^3_\mathbb{Q}$, such that each $M_n$ is the double branched cover of a knot and 
none are rational homology cobordant 
to an integral homology sphere. Therefore the family $\{M_n\}$ shows that the group $\im(\beta)/(\im(\beta)\cap \im(\psi))$ is infinitely generated.

The paper is organized as follows. In Section \ref{background} we discuss related work to put this paper into context. 
 We provide basic terminology and state a crucial lemma on integral lattices and lens spaces from \cite{Lisca2} in Section \ref{prelattices}, and
  in Section \ref{lattices} we prove Theorem \ref{thm:thm3} and Corollary \ref{cor:2bridge}.
Finally, we prove our results
relating to the homomorphism $\beta$ in Section \ref{beta}. 

\subsection*{Acknowledgements} The authors would like to thank Marco Golla, Ahmad Issa,  \c{C}a\u{g}r{\i} Karakurt, Andr\'as Stipsicz, and Sa\v{s}o Strle for helpful and encouraging conversations,
as well as Daniele Celoria for computer help. The authors also thank the referee for helpful suggestions.
The authors were supported by the ERC Advanced Grant LDTBud.
\end{section}
\begin{section}{Background}\label{background}

Two knots $K_0$ and $K_1$ are smoothly concordant if there exists a smoothly and properly embedded cylinder $C$ in 
$S^3 \times I$ such that 
$C\cap (S^3 \times \{0\})$ is isotopic to $K_0$ and $C \cap (S^3 \times \{1\})$ is isotopic to $K_1$
(equivalently if the connected sum $-K_0\sharp K_1$ is smoothly slice, where $-K_0$ denotes the mirror of $K_0$ with opposite orientation). 
This defines an equivalence relation, and the set of knots under this equivalence relation forms 
an abelian group $\mathcal{C}$ called the smooth knot concordance group, where the group operation is connected sum. The study of the knot concordance group has a long and rich history, and we refer the reader to the 
introduction of \cite{HKL} for a brief survey of known results.

In a different direction, given a ring $R$, the set of $R$-homology 3-spheres under the equivalence relation of $R$-homology cobordism\footnote{A 3--manifold $Y$ is an $R$-homology sphere if its homology with 
$R$ coefficients is the same as that of $S^3$. A cobordism $W$ between 3--manifolds $Y_1$ and $Y_2$ is an $R$-homology cobordism if the induced inclusion maps $\iota_i \co H_*(Y_i; R) \rightarrow H_*(W; R)$ are 
isomorphisms.} forms an abelian group where the operation is connected sum. The most commonly studied groups are those corresponding to $R = \mathbb{Z},\mathbb{Q}$, or $\mathbb{Z}/2\mathbb{Z}$, and the resulting 
groups are denoted $\Theta^3_\mathbb{Z}, \Theta^3_\mathbb{Q}$, or $\Theta^3_{\mathbb{Z}/2\mathbb{Z}}$.

A few properties of the integral homology sphere group $\Theta^3_\mathbb{Z}$ are listed below.

\begin{itemize}

\item The Rokhlin invariant of a homology 3-sphere provides a homomorphism $\Theta^3_\mathbb{Z} \rightarrow \mathbb{Z}/{2\mathbb{Z}}$. Hence any $\mathbb{Z}$-homology sphere with nontrivial Rokhlin invariant is 
nontrivial in $\Theta^3_\mathbb{Z}$. Furthermore, the Ozsv{\'a}th-Szab{\'o} correction terms \cite{OSz} provide a homomorphism $\Theta^3_\mathbb{Z} \rightarrow \mathbb{Z}$, and so any $\mathbb{Z}$-homology sphere 
with nonzero correction term (e.g. the Poincar\'{e} sphere $\Sigma(2,3,5)$) has infinite order in $\Theta^3_\mathbb{Z}$.

\item In \cite{Fur2} Furuta showed that $\Theta^3_\mathbb{Z}$ contains a subgroup isomorphic to $\mathbb{Z}^\infty$ and hence is infinitely generated (see also \cite{FS2}).

\item More recently Manolescu \cite{Man1} disproved the triangulation conjecture (following earlier work of Galewski and Stern \cite{GalStern} and Matumoto \cite{Mat}) by showing that $\Theta^3_\mathbb{Z}$ contains no 
elements of order 2 with nontrivial Rokhlin invariant.
\end{itemize}

The last point illustrates the interest and difficulty in the open question of whether there is torsion in $\Theta^3_\mathbb{Z}$.

Among results regarding $\Theta^3_\mathbb{Q}$ 
we have Lisca's complete description of the subgroup $\mathcal{L}$ of $\Theta^3_\mathbb{Q}$ generated by lens spaces (see \cite{Lisca1} and \cite{Lisca2}).
Using a lattice-theoretic obstruction based on Donaldson's theorem \cite{Don}, Lisca determined all the relations in $\mathcal{L}$ by showing exactly which connected sums of lens spaces bound rational homology balls.

Kim and Livingston \cite{KL} showed that the cokernel of the map $\psi$ defined in the introduction is infinitely generated by strengthening a result due to Hedden, Livingston, and Ruberman \cite{HLR}. In \cite{HLR} a 
linearly independent family of topologically slice knots was constructed, none of which are concordant to a knot with Alexander polynomial equal to 1. The double branched covers of these knots were shown to be linear 
independent in $\Theta^3_{\mathbb{Z}/2\mathbb{Z}}/\im(\psi)$, and in \cite{KL} it was shown that these manifolds are in fact linear independent in $\Theta^3_\mathbb{Q}/\im(\psi)$. Note that the main theorem of \cite{KL} 
is also interesting here. The authors studied the cokernel of the map $\bigoplus \Theta^3_{\mathbb{Z}[1/p]} \rightarrow \Theta^3_\mathbb{Q}$, where the direct sum is over all primes, and showed that the cokernel contains 
a free subgroup of infinite rank as well as infinite subgroups generated by elements of order two or 3--manifolds that bound topological rational homology balls.

Finally, we remark that $\ker(\psi)$ and $\im(\psi)$ remain quite mysterious. In fact, at present it appears the only thing that can be said about the structure of these groups is that $\ker(\psi)$ contains a subgroup 
isomorphic to $\mathbb{Z}$, $\im(\psi)$ contains a $\mathbb{Z}$ summand, and at least one of these is infinitely generated.
\end{section}

\begin{section}{Preliminaries on integral lattices and lens spaces}\label{prelattices}
An \emph{integral lattice} is a pair $(G,Q)$ where $G$ is a finitely generated free abelian group equipped with a $\Z$-valued symmetric bilinear form $Q:G\times G\rightarrow \Z$.
A \emph{morphism} of integral lattices is a homomorphism of abelian groups which preserves the bilinear forms. An \emph{embedding} of integral lattices is an injective morphism.
The standard terminology for bilinear forms applies in the context of integral lattices in the obvious way (e.g. positive definiteness, unimodularity, and so on).
 \begin{lemma}\label{unimodular}
  Let $(G,U)$ be a positive definite unimodular lattice of rank $n$. The following conditions are equivalent.
  \begin{enumerate}
   \item $(G,U)$ is isomorphic to the standard lattice $(\Z^n,I)$;
   \item there exists an embedding $(G,U)\hookrightarrow (\Z^n,I)$;
   \item for some $m\in\N$ there exists an embedding $(G,U)\hookrightarrow (\mathbb{Z}^m,I)$.
  \end{enumerate}
 \end{lemma}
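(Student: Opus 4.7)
The implications $(1)\Rightarrow (2)\Rightarrow (3)$ are immediate, so the content is $(3)\Rightarrow (1)$. Let $L$ denote the image of the embedding $(G,U)\hookrightarrow(\Z^m,I)$; then $L$ is a positive definite unimodular sublattice of $(\Z^m,I)$ of rank $n$, and proving $(1)$ is equivalent to proving $L\cong(\Z^n,I)$.

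The plan has two steps. First I would show that $L$ splits off as an orthogonal summand, i.e.\ $\Z^m=L\oplus L^\perp$. Given $x\in\Z^m$, the functional $z\mapsto\langle x,z\rangle$ on $L$ is, by unimodularity of $L$, represented by a unique $\pi(x)\in L$; then $x-\pi(x)\in L^\perp$, so $\Z^m=L+L^\perp$. Positive definiteness forces $L\cap L^\perp=0$, so the sum is direct. (This is the standard fact that a unimodular sublattice is always an orthogonal summand, and is the step most likely to deserve a careful sentence in the write-up — though the argument itself is short.)

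The second step identifies $L$ concretely. Let $e_1,\dots,e_m$ be the standard basis of $\Z^m$ and write $e_i=u_i+v_i$ with $u_i\in L$, $v_i\in L^\perp$. Then
\[
1=\langle e_i,e_i\rangle=\langle u_i,u_i\rangle+\langle v_i,v_i\rangle,
\]
and since both summands are non-negative integers (both $L$ and $L^\perp$ are positive definite and inherit the integer-valued form), one of them must vanish. Hence each $e_i$ lies entirely in $L$ or entirely in $L^\perp$. Letting $S=\{i:e_i\in L\}$, the sublattice $\Z\langle e_i:i\in S\rangle\subseteq L$ and $\Z\langle e_j:j\notin S\rangle\subseteq L^\perp$ together span $\Z^m$ and have ranks summing to $m$; comparing ranks in the decomposition $\Z^m=L\oplus L^\perp$ gives equality in both inclusions. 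In particular $L=\Z\langle e_i:i\in S\rangle$ with $|S|=n$, so $(L,U)\cong(\Z^n,I)$, which is $(1)$.

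The only nontrivial input is the orthogonal-summand property of unimodular sublattices; once that is in hand, the argument reduces to the elementary observation that a unit vector in a positive definite integral lattice cannot split nontrivially. No additional machinery beyond what is standard for integral lattices is required.
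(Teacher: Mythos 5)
Your proposal is correct and follows the same route as the paper: use unimodularity to split $(\Z^m,I)$ as $L\oplus L^{\perp}$, then observe that each standard basis vector, having norm $1$ in a positive definite integral lattice, must lie entirely in $L$ or in $L^{\perp}$, and count ranks. You simply spell out the two steps (the orthogonal-summand projection and the rank comparison) that the paper states more tersely.
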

\begin{proof}
 Clearly we only need to prove that $(3)\Rightarrow (1)$. Suppose we are given an embedding $(G,U)\hookrightarrow (\Z^m,I)$. Since this map is injective we must have $m\geq n$.
 Since $(G,U)$ is unimodular we obtain a decomposition
 $$
 (G,U)\oplus (G,U)^{\bot}\cong (\Z^m,I).
 $$
 Any element in the standard basis of $(\Z^m,I)$ must project to zero in either $(G,U)$ or $(G,U)^{\bot}$. This shows that $(G,U)\cong (\Z^n,I)$ and that $(G,U)^{\bot}\cong(\Z^{m-n},I)$.  
\end{proof}
For any pair of coprime integers $(p,q)$ with $p>q>0$, the \emph{lens space} $L(p,q)$ is, by definition, the result of $-\frac{p}{q}$-surgery on the unknot. Each lens space
$L(p,q)$ arises as the boundary of the \emph{canonical plumbed} 4-manifold $P(p,q)$ via the plumbing graph
   \[
  \begin{tikzpicture}[xscale=1.5,yscale=-0.5]
    \node (A0_1) at (1, 0) {$-a_1$};
    \node (A0_2) at (2, 0) {$-a_2$};
    \node (A0_4) at (4, 0) {$-a_n$};
    \node (A1_0) at (0, 1) {$\Gamma_{p,q}:=$};
    \node (A1_1) at (1, 1) {$\bullet$};
    \node (A1_2) at (2, 1) {$\bullet$};
    \node (A1_3) at (3, 1) {$\dots$};
    \node (A1_4) at (4, 1) {$\bullet$};
    \path (A1_2) edge [-] node [auto] {$\scriptstyle{}$} (A1_3);
    \path (A1_3) edge [-] node [auto] {$\scriptstyle{}$} (A1_4);
    \path (A1_1) edge [-] node [auto] {$\scriptstyle{}$} (A1_2);
  \end{tikzpicture}
  \]

where the $a_i$'s are uniquely determined by $a_i\geq 2$ and
$$
[a_1,\dots,a_n]^-:=a_1-\frac{1}{a_2-\frac{1}{\dots}}=\frac{p}{q}.
$$
The second integral homology group of $P(p,q)$ equipped with its intersection form is a negative definite integral lattice which we denote by $(\Z\Gamma_{p,q},Q_{p,q})$ 
and call the \emph{integral lattice associated with}
$L(p,q)$. Since $-L(p,q)\cong L(p,p-q)$ we also obtain a \emph{dual} negative definite integral lattice $(\Z\Gamma_{p,p-q},Q_{p,p-q})$ associated with $L(p,p-q)$. 

Most of this terminology extends to connected sums of 
lens spaces. Given such a connected sum $\sharp_{i=1}^nL(p_i,q_i)$, the canonical plumbing graph is obtained by taking the disjoint union of the canonical plumbing 
graphs associated to each summand. 
The associated integral lattice is
$$
(\Z\Gamma_{p_1,q_1;\dots;p_n,q_n},Q_{p_1,q_1;\dots;p_n,q_n}):=\bigoplus_{i=1}^n (\Z\Gamma_{p_i,q_i},Q_{p_i,q_i}).
$$
Similarly we may extend the notion of dual lattice. The following crucial lemma is implicit in \cite{Lisca2}.
\begin{lemma}\label{Lisca}
  Let $\sharp_{i=1}^nL(p_i,q_i)$ be a a connected sum of lens spaces. Assume that the integral lattice $(\Z\Gamma_{p_1,q_1;\dots;p_n,q_n},Q_{p_1,q_1;\dots;p_n,q_n})$ 
  embeds in the standard unimodular negative definite lattice of the same rank and the same holds for its dual lattice.
 Then, either one summand or the connected sum of two summands smoothly bounds a rational homology ball.
\end{lemma}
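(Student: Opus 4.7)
The plan is to extract this result directly from the combinatorial heart of Lisca's paper \cite{Lisca2}. Lisca's main theorem classifies which connected sums of lens spaces bound a rational homology ball, and does so by showing that this bounding condition is equivalent to the embedding hypothesis we have assumed (simultaneously for the primal and dual lattices). The nontrivial direction of this equivalence is proved by induction on the number of summands, and our lemma is precisely the reduction step used at each stage of that induction.

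More concretely, Lisca's proof proceeds by showing that any embedding $(\Z\Gamma_{p_1,q_1;\dots;p_n,q_n}, Q_{p_1,q_1;\dots;p_n,q_n}) \hookrightarrow (\Z^N, -I)$ into the standard negative-definite lattice of the same rank must contain a distinguished sub-configuration, a so-called standard subset, belonging to an explicit finite list of combinatorial types. For each type on this list, Lisca exhibits a Casson--Harer style Kirby diagram of a rational homology ball whose boundary matches the corresponding lens space configuration. The key observation is that each standard subset in the list involves the plumbing vertices of either a single summand $L(p_i, q_i)$, or of exactly two summands $L(p_i, q_i)$ and $L(p_j, q_j)$ whose connected sum is the boundary of the associated ball.

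Thus the proof strategy is as follows. First, apply Lisca's structure theorem for embeddings to the given embedding of $(\Z\Gamma_{p_1,q_1;\dots;p_n,q_n}, Q_{p_1,q_1;\dots;p_n,q_n})$ in order to locate a standard subset. Second, read off the corresponding one or two lens space summands from the combinatorial type of this subset. Third, conclude by invoking Lisca's explicit rational ball construction for that type. The dual embedding hypothesis enters in order to exclude degenerate configurations that would otherwise pass the primal-only combinatorial test without yielding a topological filling, mirroring its role in Lisca's original argument.

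The main obstacle is that verifying Lisca's structure theorem requires a lengthy combinatorial case analysis: one studies the inner products of the images of plumbing vertices with the standard basis vectors, and uses the chain structure together with the same-rank hypothesis to progressively pin down the form of the embedding. For our purposes it is not necessary to reproduce this analysis, since it is already carried out in \cite{Lisca2}; it suffices to cite the classification and to observe the ``at most two summands'' feature running through the list of standard subsets.
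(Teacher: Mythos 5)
The paper gives no proof of this lemma at all --- it is stated as being ``implicit in \cite{Lisca2}'' --- and your proposal likewise defers entirely to that source, so the two approaches coincide. Your sketch of how the statement is extracted from Lisca's work (the same-rank embedding forces a standard subset from a finite combinatorial list, each type implicates at most two lens-space summands, and Lisca's explicit constructions furnish the rational ball for each) is a faithful and useful gloss on what the citation is standing in for.
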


\end{section}
\begin{section}{Homology spheres and lens spaces}\label{lattices}

The following is a more precise statement of Theorem \ref{thm:thm3}.

 \begin{theorem}\label{main}
  Let $\sharp_{i=1}^nL(p_i,q_i)$ be a connected sum of lens spaces. The following are equivalent.
  \begin{enumerate}
   \item $\sharp_{i=1}^nL(p_i,q_i)$ smoothly bounds a rational homology ball;
   \item $\sharp_{i=1}^nL(p_i,q_i)$ is $\Q$-homology cobordant to some integral homology sphere.
  \end{enumerate}
 \end{theorem}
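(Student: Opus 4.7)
The implication $(1) \Rightarrow (2)$ is immediate, since a rational homology ball bounded by $Y$ provides a $\mathbb{Q}$-homology cobordism from $Y$ to the integer homology sphere $S^3$. For $(2) \Rightarrow (1)$ I plan to induct on the number $n$ of lens space summands of $Y$, with the inductive step driven by Lemma \ref{Lisca}. The case $n=0$ is trivial, so assume $Y = \sharp_{i=1}^n L(p_i, q_i)$ is $\mathbb{Q}$-homology cobordant to an integer homology sphere $Z$ via a cobordism $W$ with $\partial W = Y \sqcup (-Z)$, and let $P$, $P'$ denote the canonical negative definite plumbings bounded by $Y$ and $-Y = \sharp_{i=1}^n L(p_i, p_i - q_i)$ respectively.

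The centerpiece of the argument is the closed smooth 4-manifold
$$
X \;:=\; P \cup_Y W \cup_Z (-W) \cup_{-Y} P',
$$
obtained by gluing two oppositely oriented copies of $W$ along their $Z$ ends and capping off the remaining $Y$ and $-Y$ boundaries with the plumbings; the trick of using both $W$ and $-W$ avoids the need to fill $Z$ in by any particular 4-manifold. Writing $X = X_1 \cup_Z X_2$ with $X_1 = P \cup_Y W$ and $X_2 = P' \cup_{-Y} (-W)$, Mayer-Vietoris combined with $H_2(W;\mathbb{Q}) = 0$ (since $W$ is a $\mathbb{Q}$-homology cobordism) and $H_*(Z;\mathbb{Z}) \cong H_*(S^3;\mathbb{Z})$ (since $Z$ is an integer homology sphere) yields an orthogonal splitting
$$
H_2(X;\mathbb{Z})/\mathrm{torsion} \;\cong\; \Lambda_1 \oplus \Lambda_2,
$$
where each $\Lambda_i$ is a negative definite lattice containing the plumbing lattice of $P$ or $P'$ as a finite-index sublattice.

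Donaldson's theorem applied to the closed smooth negative definite manifold $X$ identifies $H_2(X;\mathbb{Z})/\mathrm{torsion}$ with the standard negative definite lattice of rank $\mathrm{rk}(P) + \mathrm{rk}(P')$. Since the splitting above is a genuine orthogonal direct sum, multiplicativity of determinants forces each $\Lambda_i$ to be unimodular, and then Lemma \ref{unimodular} (in its negative definite version) shows that each $\Lambda_i$ is itself standard. The plumbing lattices of $P$ and $P'$ therefore embed into standard negative definite lattices of the same ranks, verifying both hypotheses of Lemma \ref{Lisca}. That lemma produces one or two summands $Y_0$ of $Y$ that together bound a rational homology ball; the complement $Y'$ in $Y = Y_0 \sharp Y'$ has strictly fewer summands and remains $\mathbb{Q}$-homology cobordant to $Z$ (by composing $W$ with the rational ball bounded by $Y_0$), so the inductive hypothesis applied to $Y'$ finishes the proof.

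The main technical point I expect to watch carefully is the orthogonal direct-summand splitting above: it would not be enough to know that $P \oplus P'$ merely embeds in some larger standard lattice, since, for instance, $\langle -5 \rangle \oplus \langle -5 \rangle$ embeds in $-\mathbb{Z}^2$ while $\langle -5 \rangle$ does not embed in $-\mathbb{Z}$, so one genuinely needs each $\Lambda_i$ to be unimodular of the correct rank to invoke Lemma \ref{unimodular}. This is exactly where the hypothesis that $Z$ is an integer homology sphere (rather than merely a rational one) is used: it forces the Mayer-Vietoris sequence to split integrally at $H_2$ and produces a direct sum rather than only a finite-index inclusion.
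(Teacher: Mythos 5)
Your proposal is correct and follows essentially the same route as the paper: the same closed negative definite manifold $P\cup_Y W\cup_Z(-W)\cup_{-Y}P'$, Donaldson's theorem together with Lemma \ref{unimodular} to standardize the two halves, Lemma \ref{Lisca} to strip off summands, and iteration. The only cosmetic difference is that you deduce unimodularity of each piece from the orthogonal splitting of the closed manifold's form, whereas the paper reads it off directly from the fact that $P\cup_Y W$ has integral homology sphere boundary --- the same fact in different clothing.
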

\begin{proof}
 Clearly we only need to prove that $(2)\Rightarrow (1)$. 
 Let $P$ be the canonical plumbing associated with $\sharp_{i=1}^nL(p_i,q_i)$ and let $W$ be a rational homology cobordism such that
 $\partial W=- \sharp_{i=1}^nL(p_i,q_i)\sqcup Y$, where $Y$ is some integral homology sphere. 
 Consider the smooth 4-manifold $X:=P\cup_{\partial} W$ whose boundary is $Y$. It is easy to see 
 that this 4-manifold has a negative definite unimodular intersection form. The second integral homology group together with this intersection form is a unimodular 
 negative definite integral lattice $(G,U)$ associated with $Y$. 
 Similarily, let $P^*$ be the canonical plumbing associated with $-\sharp_{i=1}^nL(p_i,q_i)=\sharp_{i=1}^nL(p_i,p_i-q_i)$ and define 
 $X^*:=P^*\cup_{\partial} (-W)$ so that $\partial X=Y=-\partial X^*$. The smooth, closed 4-manifold $X\cup_{\partial} X^*$
 is negative definite and by Donaldson's theorem has standard intersection form. In particular, by lemma \ref{unimodular},
 the unimodular lattice $(G,U)$ is standard (consider the inclusion $X\hookrightarrow X\cup_{\partial} X^*$). 
 It follows that the integral lattice associated with $\sharp_{i=1}^nL(p_i,q_i)$ embeds in the standard negative definite lattice of 
 the same rank via the inclusion $P\hookrightarrow X$. 
 The same argument shows that the integral lattice associated with $\sharp_{i=1}^nL(p_i,p_i-q_i)$ embeds in the standard negative definite lattice of 
 the same rank via the inclusion $P^*\hookrightarrow X^*$.
 
By Lemma \ref{Lisca}, up to rational homology cobordism we can remove either one or two summands from our connected sum of lens spaces. Now we can iterate the whole argument 
with this reduced connected sum. Eventually we will get an empty sum (i.e. $S^3$) and the conclusion follows.

 \end{proof}

\begin{repcorollary}{cor:2bridge}
 Let $K\subset S^3$ be a connected sum of 2-bridge knots. 
 Assume that $K$ is smoothly concordant to some knot $J$ with $det(J)=1$. Then, $K$ is smoothly slice.
\end{repcorollary}
\begin{proof}
 Concordant knots have rational homology cobordant branched double covers (see Proposition \ref{homo}). Moreover, the determinant of a knot is the order of the first integral homology group
 of its branched double cover. Recall that the branched double cover of a $2$-bridge knot is a lens space.  
 It follows that the connected sum of lens spaces (the branched double cover of $K$, say $\Sigma(K)$) is rational homology cobordant to some integral homology sphere
 (the branched double cover of $J$). By Theorem \ref{main}, $\Sigma(K)$ bounds a rational ball and, by \cite{Lisca1} and \cite{Lisca2}, $K$ is smoothly slice.
\end{proof}

\end{section}
\begin{section}{The double branched cover homomorphism}\label{beta}
\begin{figure}
\includegraphics[scale=01.00]{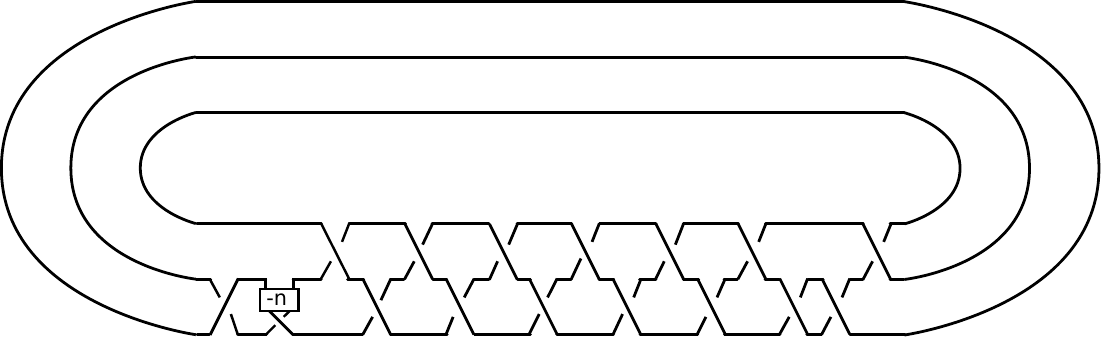}
\centering
 \caption{The knots $K_n$. The box represents $n$ full left-handed twists. Note that $K_0$ is the torus knot $T_{3,7}$.}
\label{Knstretch}
\end{figure}

Let $K$ be a knot in $S^3$, and let $\Sigma_K$ denote its double branched cover.
The following familiar proposition establishes that the assignment $K \mapsto \Sigma_K$ 
induces a well-defined homomorphism $\beta \co \mathcal{C} \rightarrow \Theta^3_\mathbb{Q}$.

\begin{proposition}\label{homo}
 If $K$ is slice, then $\Sigma_K$ bounds a rational homology ball. 
 Furthermore, $\Sigma_{K\sharp K'} = \Sigma_K \sharp \Sigma_{K'}$.
\end{proposition}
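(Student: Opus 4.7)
The plan is to produce the rational homology ball in part one as the double cover of $B^4$ branched along a chosen slice disk for $K$, and to verify the connect-sum identity in part two by a cut-and-paste argument along an unknotted arc. The two statements are essentially independent, and I would treat them in sequence.

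For the first statement, choose a smoothly and properly embedded slice disk $D\subset B^4$ with $\partial D=K$, and let $p\colon V\to B^4$ denote the 2-fold cover branched along $D$. The restriction of $p$ to $\partial V$ is precisely the standard double cover of $S^3$ branched along $K$, so $\partial V=\Sigma_K$. To verify that $V$ has the rational homology of a ball, I would use the transfer map argument: the deck transformation $\tau$ of the branched cover splits $H_\ast(V;\mathbb{Q})$ into its $(\pm 1)$-eigenspaces, with the $+1$-eigenspace identified with $H_\ast(B^4;\mathbb{Q})$ via the transfer. The question reduces to vanishing of the $-1$-eigenspace, which is controlled by the rational homology of the unbranched 2-fold cyclic cover of $X := B^4\setminus \nu(D)$. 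Because $H_1(X;\mathbb{Z})\cong\mathbb{Z}$ is generated by a meridian of the slice disk, Poincar\'e--Lefschetz duality applied to $X$ together with a standard Alexander module computation forces the rational homology of this cover to match that of $X$ itself, and the desired vanishing follows.

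For the second statement, isotope the knots so that a 2-sphere $\mathcal{S}\subset S^3$ realizes the connect-sum decomposition, meeting $K\sharp K'$ transversely in two points and cutting $(S^3,K\sharp K')$ into two ball-arc pairs; gluing a trivial ball-arc pair to each side recovers $(S^3,K)$ and $(S^3,K')$ respectively. Since the double cover of a 3-ball branched along a trivial arc is again a 3-ball, the preimage of each side of $\mathcal{S}$ in $\Sigma_{K\sharp K'}$ is $\Sigma_K$ or $\Sigma_{K'}$ with an open 3-ball removed, and these are glued along the $S^2$ that doubly covers $\mathcal{S}$. By the definition of connect sum this is $\Sigma_K\sharp\Sigma_{K'}$.

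The main technical content lies in part one, specifically confirming that the 2-fold cyclic cover of a slice-disk complement has the rational homology of a circle; this is classical but is the one step that is not purely formal. The connect-sum identity is then a routine exercise in tracking preimages under the branched covering map.
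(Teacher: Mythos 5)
Your proposal is correct and matches the paper's proof in substance: the paper simply cites Casson--Gordon for the first assertion, and your transfer/eigenspace argument reducing to the rational homology of the $2$-fold cyclic cover of the slice disk complement is exactly the classical argument being cited (the non-formal step you flag ultimately rests on $\Delta_K(-1)=\pm\det(K)\neq 0$, which makes $t+1$ act invertibly on the rational Alexander module). Your cut-and-paste argument for the second assertion is precisely the paper's observation that the double cover of a sphere branched over two points is again a sphere.
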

\begin{proof}
 For the first assertion see for instance \cite{CG}. 
 The second assertion follows from the fact that the double cover of a 2-sphere branched over two points 
 is again a 2-sphere.
\end{proof}

Now recall the manifolds $E_n = S^3_{1/n}(E)$ defined in the introduction, where $E$ is the figure-eight knot.

\begin{proposition}\label{balls}
The manifolds $E_{n}$ all bound rational homology balls.
\end{proposition}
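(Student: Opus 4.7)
The plan is to construct, for each $n$, a rational homology $4$-ball with boundary $E_n$, leveraging the rational sliceness of $E$. By the result of Cha cited in the introduction, $E$ bounds a smoothly and properly embedded disk $D$ in a rational homology $4$-ball $B$ with $\partial B = S^3$.

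First, I would study the exterior $X := B \setminus \nu(D)$. An application of excision on $\nu(D) \cong D \times D^2$ combined with the long exact sequence of the pair $(B,X)$ shows that $X$ has the rational homology of $S^1$; in particular $H_1(X;\Q) \cong \Q$ is generated by a meridian $\mu$ of $D$, while $H_2(X;\Q) = 0$. A standard framing computation (using that $H_2(B,\partial B;\Q) = 0$ by Lefschetz duality, so that $D$ has trivial rational self-intersection) shows that $D$ induces the Seifert framing on $E$, so $\partial X$ is identified with the zero-surgery $S^3_0(E)$.

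Next, I would attach a $2$-handle $h$ to $X$ along $\mu \subset \partial X$, regarded as the core of the surgery solid torus, with framing $-n$ relative to its natural framing coming from $S^3$. In the resulting Kirby diagram one has $E$ with coefficient $0$ together with an unknotted meridian $\mu$ with coefficient $-n$; a single slam-dunk move absorbs $\mu$, replacing the coefficient of $E$ by $0 - \frac{1}{-n} = \frac{1}{n}$. Hence the 4-manifold $W := X \cup h$ has boundary $S^3_{1/n}(E) = E_n$.

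It remains to verify that $W$ is a rational homology $4$-ball, which follows from a short Mayer-Vietoris argument for $W = X \cup h$ with $X \cap h \simeq S^1$: since $\mu$ represents a generator of $H_1(X;\Q)$, the induced map $H_1(S^1;\Q) \to H_1(X;\Q)$ is an isomorphism, forcing $H_1(W;\Q) = 0$, and the vanishing of $H_2(X;\Q)$ then yields $H_2(W;\Q) = 0$ as well. The main technical point deserving care is the identification $\partial X = S^3_0(E)$, which rests on the Seifert framing fact above, but this is standard in the concordance literature.
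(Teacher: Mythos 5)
Your argument is correct and is essentially the same as the paper's: the paper views $\nu D$ as a $2$-handle attached to $X = B \setminus \nu D$ along a meridian of $D$ and reattaches it with the framing changed by $n$ left-handed twists, which is exactly your construction of $W = X \cup h$ with framing $-n$ on $\mu$. You have simply made explicit the two points the paper leaves implicit (the identification $\partial X = S^3_0(E)$ together with the slam-dunk giving $\partial W = S^3_{1/n}(E)$, and the Mayer--Vietoris verification that reattaching the $2$-handle preserves the rational homology), both of which check out.
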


\begin{proof}
In \cite{Cha} Cha gives an argument (attributed to Cochran) showing how the work of \cite{FS} can be modified to show that the figure-eight knot $E$ bounds a smooth disk $D$ in a rational homology ball $B$. 
A neighborhood $\nu D$ of $D$ is diffeomorphic to $D^2 \times D$ and can be thought of as a 2-handle attached to $B \setminus \nu D$, where the slice disk $D$ is the cocore of the 2-handle (and so $E$ is the belt sphere). 
If we remove this 2-handle and reattach it with a framing that differs from the original framing by $n$ left-handed twists, then the boundary will change by $1/n$-surgery on $E$ (if $n$ is negative we change the 
framing by $|n|$ right-handed twists). Since we are just removing a 2-handle and then reattaching it with a different framing, we are not changing the homology of the 4--manifolds, and so the resulting 3--manifolds 
$E_n$ all bound rational homology balls. (See \cite{LM} for more exposition of this operation, where it was called 2-handle surgery.)
\end{proof}

 It may be worth noting that the rational homology balls constructed in Proposition \ref{balls} have $2$-torsion in their 
 second homology groups, and if $n$ is odd they are necessarily non-spin because
 they have nontrivial Rokhlin invariant. 

Montesinos \cite{Mont} showed that any 3--manifold obtained by Dehn surgery on a strongly-invertible knot can be realized as the double branched cover of a knot or 2-component link in $S^3$.
Since the figure-eight knot $E$ is strongly invertible, we apply his argument to show the following.

\begin{figure}[h!]
\includegraphics[scale=1.20]{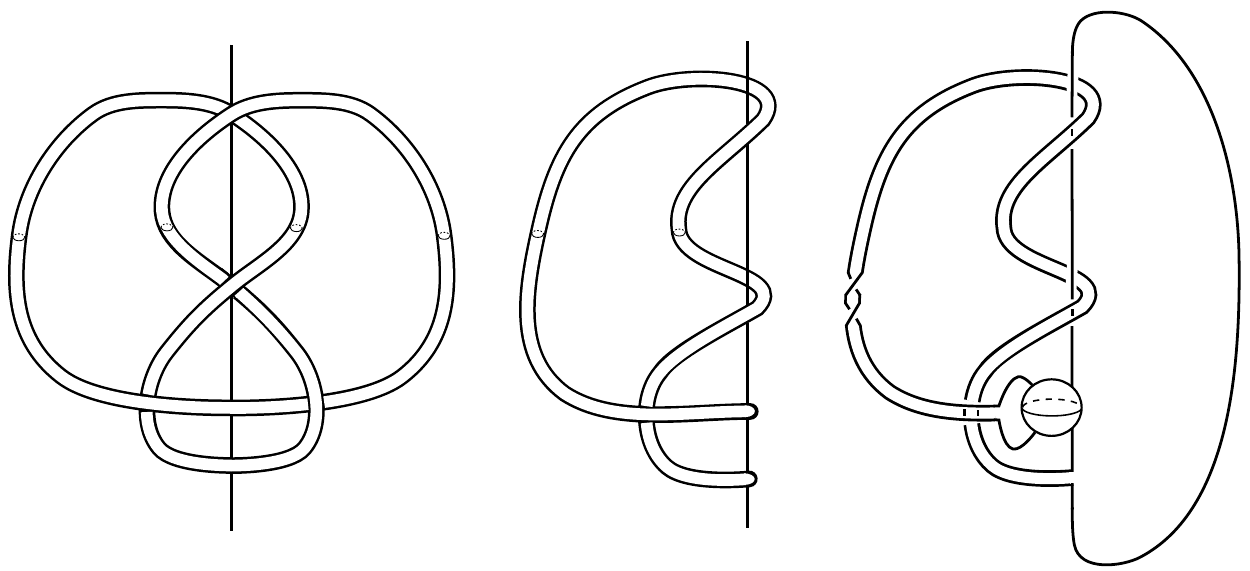}
\centering
 \caption{The involution on the figure-eight knot complement.}
\label{mont}
\end{figure}

\begin{figure}[h!]
\includegraphics[scale=1.90]{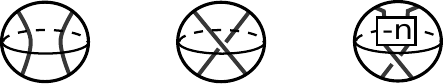}
\centering
 \caption{Rational tangles corresponding to $\infty,+1$, and $\frac{1}{2n+1}$-surgeries.}
\label{trick}
\end{figure}

\begin{proposition}\label{covers}
The manifolds $E_{2n+1}$ are double branched covers of the knots $K_n$ of Figure \ref{Knstretch}.
\end{proposition}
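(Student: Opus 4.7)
The plan is to apply the Montesinos trick to the figure-eight knot with its strong involution, and then identify the resulting branch set explicitly as $K_n$. Recall that $E$ is strongly invertible, and Figure \ref{mont} displays an involution $\tau$ whose axis meets $E$ in two points. The quotient $S^3/\tau$ is again $S^3$, and the image of $E$ under the quotient map is an unknotted arc; the fixed-point set of $\tau$ becomes a tangle in the ball that is the quotient of the complement of a tubular neighborhood of $E$.

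First I would analyze the local picture near $E$. A $\tau$-equivariant tubular neighborhood $\nu E$ is a solid torus on which $\tau$ acts as a hyperelliptic involution, so its quotient is a 3-ball $B$ containing two arcs (the quotient of the fixed axis intersected with $\nu E$). The complementary ball $B' = (S^3 \setminus \nu E)/\tau$ inherits a 2-string tangle $T'$ coming from the axis, and gluing $B$ and $B'$ along their boundary recovers $S^3$ with branch set the knot or link obtained by capping off $T'$ with the tangle from $B$. When $\nu E$ is reattached to give $\infty$-surgery (i.e.\ $S^3$), $B$ contributes the trivial ($\infty$) tangle, and the double branched cover of the resulting knot/link is by construction $S^3$. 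To obtain $1/k$-surgery on $E$ instead, the Montesinos trick says that one replaces the $\infty$-tangle in $B$ by the rational tangle of slope $1/k$, while $B'$ is unchanged; the resulting branch set $L_k$ in $S^3$ has double branched cover $E_k = S^3_{1/k}(E)$.

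Next I would carry out this replacement pictorially. Using Figure \ref{trick}, the $\infty$ tangle corresponds to the trivial surgery, the $+1$ tangle to $+1$-surgery on $E$, and more generally the $1/(2n+1)$ tangle, consisting of $2n+1$ half-twists of a chosen sign, corresponds to $1/(2n+1)$-surgery. I would then take the projection of the quotient tangle $T'$ of the figure-eight exterior (readable off Figure \ref{mont}) and paste in $2n+1$ left-handed half-twists in place of the trivial tangle. A direct Reidemeister-move simplification shows this yields the diagram of $K_n$ in Figure \ref{Knstretch}: the figure-eight tangle portion produces the three-crossing block, while the box of $n$ full twists in $K_n$ is exactly the image of the $2n+1$ half-twist rational tangle after absorbing one half-twist into a framing convention. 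In particular, the $n=0$ case gives a diagram that simplifies to the torus knot $T_{3,7}$, consistent with the caption of Figure \ref{Knstretch}.

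The main obstacle is the bookkeeping of slopes and orientations: one must carefully identify the meridian and preferred longitude of $E$ in the quotient so that the rational tangle of slope $1/(2n+1)$ genuinely corresponds to $1/(2n+1)$-surgery (not, say, $(2n+1)$-surgery or a surgery of the opposite sign), and one must verify that the resulting branch set is connected (a knot) rather than a two-component link for odd surgery coefficients. Both points are routine consequences of the Montesinos trick once the involution on the solid torus and the framing are fixed, so aside from this careful normalization the argument is a direct diagram chase yielding $K_n$.
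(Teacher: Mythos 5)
Your argument is correct and is essentially the paper's own proof: both apply the Montesinos trick to the strong involution of the figure-eight knot, quotient the exterior to a tangle in a ball, replace the trivial filling by the rational tangle for $1/(2n+1)$-surgery, and identify the resulting branch set with $K_n$ by a diagrammatic isotopy. The paper likewise leaves the final identification (and the slope/framing bookkeeping you flag) to an inspection of Figures \ref{mont} and \ref{trick}, citing \cite{Auk} for details of the tangle-replacement procedure.
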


\begin{proof}
The left-hand picture of Figure \ref{mont} shows an involution of $S^3$ (rotation by 180 degrees about the specified axis) taking $E$ to $-E$ with two fixed points.
In fact, this picture shows that there is an induced involution on the knot complement $X_E := \overline{S^3 \setminus \nu E}$.
Now $E_n$ is formed by gluing in a solid torus $U$ to $X_E$, and Montesinos \cite{Mont} showed that we can always arrange for the involution on the knot complement to fit together
with the hyperelliptic involution on $U$ to give an involution on the manifold obtained by the Dehn surgery.
The quotient of $U$ by the hyperelliptic involution is a copy of $B^3$, and in the middle picture of Figure \ref{mont} we see that the quotient of $X_E$ gives $\overline{S^3 \setminus B^3} = B^3$.
The upshot is that $E_n$ is a double branched cover of $B^3\cup B^3 =S^3$, where the branch set is some rational tangle filling of the right-hand picture of Figure \ref{mont}
(to obtain the right-hand picture from the middle picture we isotope the deleted 3-ball, dragging along the arc).

The appropriate fillings are illustrated in Figure \ref{trick}, where the left-hand picture corresponds to $S^3$, the middle picture corresponds to $E_1 = \Sigma(2,3,7)$, 
and the right-hand picture corresponds to $E_{2n+1}$ (we refer to \cite{Auk} for more details on this procedure). 
Lastly, one can check that substituting the tangle in the right-hand picture of Figure \ref{trick} into the right-hand picture 
of Figure \ref{mont} and performing an isotopy results in the knots $K_n$ of Figure \ref{Knstretch}.

\end{proof}

For a knot $K$ in $S^3$, with Seifert form $S$, we define the Alexander polynomial as $\Delta_K(x):=\det(xS-x^{-1}S^T)$.
With this definition $\Delta_K$ satisfies the following skein relation 
$$
\Delta_{L_+}-\Delta_{L_-}=(x^{-1}-x)\Delta_{L_0},
$$
where $L_+,L_-$ and $L_0$ are links that differ only in a small region as show in Figure \ref{skein}.  
 \begin{figure}[h!]
\centering 
  {\quad\qquad\includegraphics[scale=1]{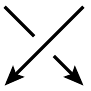}\quad}
  {\qquad\includegraphics[scale=1]{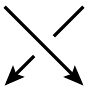}\quad}
  {\qquad\includegraphics[scale=1]{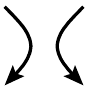}\quad}
  \caption{The crossings corresponding to $L_+,L_-$ and $L_0$, respectively.}
  \label{skein}
\end{figure}
Furthermore we can consider the Hermitian form $(1-\omega)S+(1-\overline\omega)S^T$ for any unit complex number $\omega$ \cite{Tristram}. 
The form is nonsingular when $\omega$ is not a root of the Alexander polynomial of $K$, and the signatures of these forms are called Tristram-Levine signatures. 
If we define $\sigma_\omega(K)$ to be the average of the one-sided limits of these signatures as we approach $\omega$, then $\sigma_\omega$ is a concordance invariant  for every
$\omega\in S^1$ (see, for example, \cite{Gor}), and in fact provide homomorphisms from $\mathcal{C}$ to $\mathbb{Z}$.
We use these signatures to prove the following result.

\begin{reptheorem}{thm:thm1}
Infinitely many of the knots $\{K_n\}$ are linearly independent in $\mathcal{C}$.
\end{reptheorem}
First we establish two technical lemmas. 
\begin{lemma}\label{lemma1}
 For each $n\in\mathbb{N}$ and each $\omega\in S^1\setminus\{1\}$ such that $\Delta_{K_0}(\omega^{\frac{1}{2}})\neq 0$ define
 $$
 F(\omega,n):=2(\textrm{Re}(\omega)-1)\left( 1-n+n\frac{\Delta_{K_1}(\omega^{\frac{1}{2}})}{\Delta_{K_0}(\omega^{\frac{1}{2}})} \right).
 $$
 We have
 \begin{itemize}
  \item if $F(\omega,n)<0$, then $\sigma_\omega(K_n) = \sigma_\omega(K_0)$;
  \item if $F(\omega,n)>0$, then $\sigma_\omega(K_n) = \sigma_\omega(K_0) \pm 2$.
 \end{itemize}
\end{lemma}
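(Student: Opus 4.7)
My plan is to choose a Seifert matrix for each $K_n$ that makes the twist-family structure visible, and then to analyze the resulting Tristram–Levine Hermitian form as a one-parameter family in $n$ via rank-one perturbation theory.

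Starting from a Seifert surface $F_0$ for $K_0$, I would construct a Seifert surface $F_n$ for $K_n$ by letting a band pick up the $n$ full left-handed twists that appear in the twist region of Figure \ref{Knstretch}. With respect to a natural basis of $H_1(F_n;\Z)$, the Seifert matrix then takes the form $S_n = S_0 + nE$, where $E$ is a rank-one symmetric integer matrix supported on a single diagonal entry. Since $\Delta_{K_n}(x) = \det(xS_n - x^{-1}S_n^T)$ and $E$ has rank one, multilinearity of the determinant forces $\Delta_{K_n}(x)$ to be linear in $n$, and evaluating at $n=0,1$ gives
$$\Delta_{K_n}(x) = (1-n)\Delta_{K_0}(x) + n\Delta_{K_1}(x).$$
Substituting this identity into the definition of $F(\omega,n)$ yields the more transparent reformulation
$$F(\omega,n) = 2(\textrm{Re}(\omega)-1)\,\frac{\Delta_{K_n}(\omega^{\frac{1}{2}})}{\Delta_{K_0}(\omega^{\frac{1}{2}})},$$
so $F(\omega,n)$ keeps track of the sign of the ratio $\Delta_{K_n}(\omega^{\frac{1}{2}})/\Delta_{K_0}(\omega^{\frac{1}{2}})$ weighted by the nonpositive factor $\textrm{Re}(\omega)-1$.

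Next I would unpack the Hermitian form $H_n(\omega) := (1-\omega)S_n + (1-\bar\omega)S_n^T$. From $S_n = S_0 + nE$ and the symmetry of $E$ one gets $H_n(\omega) = H_0(\omega) + n(2 - 2\,\textrm{Re}(\omega))E$, exhibiting $H_n(\omega)$ as a rank-one Hermitian perturbation of $H_0(\omega)$ with coefficient of definite sign $2 - 2\,\textrm{Re}(\omega) \geq 0$. Standard eigenvalue interlacing for rank-one Hermitian perturbations immediately implies $\sigma_\omega(K_n) - \sigma_\omega(K_0) \in \{-2, 0, +2\}$, which already produces the two possibilities listed in the lemma. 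Moreover, the signature function $n \mapsto \sigma_\omega(K_n)$ is locally constant and can only jump at those $n$ for which $H_n(\omega)$ is degenerate, i.e. precisely at the values of $n$ for which $\Delta_{K_n}(\omega^{\frac{1}{2}}) = 0$.

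It remains to decide \emph{when} the signature actually equals $\sigma_\omega(K_0)$ and when it jumps. The rank-one structure of the perturbation means that a single eigenvalue of $H_n(\omega)$ is a monotonic function of $n$, and it is exactly this eigenvalue that crosses zero at the degenerate values of $n$. Consequently the sign of the ratio $\Delta_{K_n}(\omega^{\frac{1}{2}})/\Delta_{K_0}(\omega^{\frac{1}{2}})$ tracks whether the affected eigenvalue has or has not yet crossed zero, and translating this through the identity displayed above for $F(\omega,n)$ yields precisely the dichotomy of the lemma. The main obstacle is exactly this last sign-bookkeeping: one must confirm that the eigenvalue being shifted by the rank-one perturbation is the one whose sign is controlled by $\Delta_{K_n}(\omega^{\frac{1}{2}})/\Delta_{K_0}(\omega^{\frac{1}{2}})$, which is a direct but slightly delicate computation from the explicit form $H_n(\omega) = H_0(\omega) + n(2 - 2\,\textrm{Re}(\omega))E$ together with the relationship between $\det H_n(\omega)$ and $\Delta_{K_n}(\omega^{\frac{1}{2}})$.
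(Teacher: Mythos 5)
Your proposal is correct, and it reaches the lemma by a genuinely different route than the paper. The paper writes down the explicit $14\times 14$ Seifert matrix for $K_n$ from the surface in Figure \ref{Knstretch}, observes that its upper-left $12\times 12$ block is a Seifert matrix $M$ for $K_0$, and expresses the signature difference as the sum of signs of the two ``extra'' eigenvalues $\mu_1,\mu_2$, whose product is computed as the ratio $\det\bigl((1-\omega)S_n+(1-\overline\omega)S_n^T\bigr)/\det\bigl((1-\omega)M+(1-\overline\omega)M^T\bigr)$; the linearity of $\Delta_{K_n}$ in $n$ is then obtained from the skein relation applied at the twist-region crossing. You instead keep both forms on the same $14$-dimensional space, exhibit $H_n(\omega)=H_0(\omega)+n(2-2\,\mathrm{Re}(\omega))E$ as a sign-definite rank-one Hermitian perturbation, and get the $\{-2,0,+2\}$ trichotomy from interlacing, deriving the linearity of $\Delta_{K_n}$ in $n$ from multilinearity of the determinant rather than from skein relations. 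Your approach buys robustness: it needs only the rank-one symmetric structure of the twist contribution (which the paper's explicit matrix confirms --- though note it is a $2\times 2$ block of the form $-n\,vv^T$ with $v=e_{13}-e_{14}$, not a single diagonal entry as you wrote; only rank-one symmetry is used, so this is harmless), and it avoids the paper's somewhat delicate assertion that the eigenvalues of the $14\times14$ form split as those of the $12\times12$ form together with two extra ones. What it costs is the need to know that the Tristram--Levine signature of $K_0$ may be computed from the non-minimal genus-$7$ surface, which is standard. Finally, your closing worry about ``sign-bookkeeping'' is unnecessary: since interlacing shows at most one eigenvalue of $H_n(\omega)$ changes sign as the perturbation is turned on, and $\det H_n(\omega)$ is the product of all fourteen real eigenvalues, one has $\mathrm{sign}\bigl(\det H_n(\omega)/\det H_0(\omega)\bigr)=+1$ exactly when no eigenvalue crosses zero, i.e.\ exactly when the signatures agree; combined with $F(\omega,n)=2(\mathrm{Re}(\omega)-1)\det H_n(\omega)/\det H_0(\omega)$ and $2(\mathrm{Re}(\omega)-1)<0$, this is precisely the stated dichotomy, with no need to identify which eigenvalue moves.
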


\begin{figure}
\includegraphics[scale=1.50]{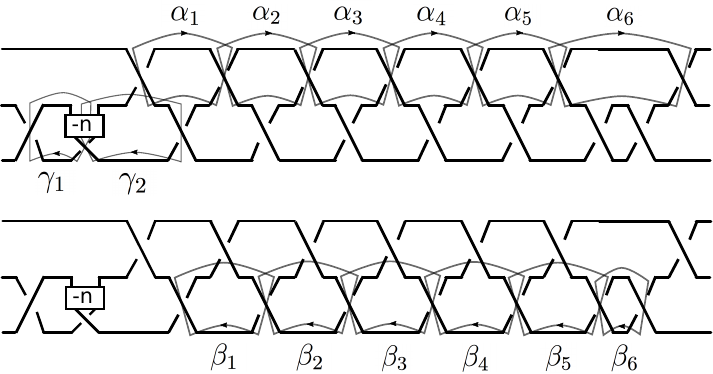}
\centering
 \caption{A basis for the first homology group of the Seifert surface of $K_n$.}
\label{basis}
\end{figure}

\begin{proof}
First note that since the Seifert form of a knot has even rank the expression $\Delta_{K}(\omega^{\frac{1}{2}})$ makes sense
even though $\omega^{\frac{1}{2}}$ is ambiguous.

 In Figure \ref{Knstretch} we see a natural Seifert surface for the knot $K_n$, consisting of three stacked disks and 16 twisted bands. 
 This is a genus 7 surface (non-minimal when $n=0$),
 and in Figure \ref{basis} we choose a basis for the first homology of the Seifert surface.
 Let $S_n$ denote the Seifert form for $K_n$ associated to this Seifert surface, oriented so that in Figure \ref{Knstretch} we are looking at the positive side of the disks.
 Choosing the ordered basis to be $\alpha_1,\cdots, \alpha_6, \beta_1,\cdots, \beta_6, \gamma_1, \gamma_2$, the corresponding matrix is
 $$\tiny{
 \left(\begin{array}{rrrrrrrrrrrrrr}
-1 & 0 & 0 & 0 & 0 & 0 & 1 & 0 & 0 & 0 & 0 & 0 & 0 & -1 \\
1 & -1 & 0 & 0 & 0 & 0 & -1 & 1 & 0 & 0 & 0 & 0 & 0 & 0 \\
0 & 1 & -1 & 0 & 0 & 0 & 0 & -1 & 1 & 0 & 0 & 0 & 0 & 0 \\
0 & 0 & 1 & -1 & 0 & 0 & 0 & 0 & -1 & 1 & 0 & 0 & 0 & 0 \\
0 & 0 & 0 & 1 & -1 & 0 & 0 & 0 & 0 & -1 & 1 & 0 & 0 & 0 \\
0 & 0 & 0 & 0 & 1 & -1 & 0 & 0 & 0 & 0 & -1 & 0 & 0 & 0 \\
0 & 0 & 0 & 0 & 0 & 0 & -1 & 0 & 0 & 0 & 0 & 0 & 0 & 0 \\
0 & 0 & 0 & 0 & 0 & 0 & 1 & -1 & 0 & 0 & 0 & 0 & 0 & 0 \\
0 & 0 & 0 & 0 & 0 & 0 & 0 & 1 & -1 & 0 & 0 & 0 & 0 & 0 \\
0 & 0 & 0 & 0 & 0 & 0 & 0 & 0 & 1 & -1 & 0 & 0 & 0 & 0 \\
0 & 0 & 0 & 0 & 0 & 0 & 0 & 0 & 0 & 1 & -1 & 0 & 0 & 0 \\
0 & 0 & 0 & 0 & 0 & 0 & 0 & 0 & 0 & 0 & 1 & -1 & 0 & 0 \\
0 & 0 & 0 & 0 & 0 & 0 & 0 & 0 & 0 & 0 & 0 & 0 & -n & n \\
0 & 0 & 0 & 0 & 0 & 0 & 0 & 0 & 0 & 0 & 0 & 0 & n + 1 & -n - 1
\end{array}\right).}
 $$
 Note that the upper-left $12\times 12$ minor coincides with the Seifert form for $K_0=T_{3,7}$ associated to the Seifert surface obtained from Figure \ref{Knstretch} after the obvious second Reidemeister move. 
 Let us indicate this form with $M$.
 We can write 
 $$
 \det((1-\omega)S_n+(1-\overline{\omega})S_n^T)=\lambda_1(\omega)\cdots\lambda_{12}(\omega)\cdot \mu_1(\omega,n)\cdot\mu_2(\omega,n),
 $$
 where $\lambda_1(\omega),\dots,\lambda_{12}(\omega)$ are the eigenvalues of the Hermitian form $(1-\omega)M+(1-\overline{\omega})M^T$, and 
 $\mu_1(\omega,n)$ and $\mu_2(\omega,n)$ are eigenvalues that depend on $n$. Since the signature $\sigma_{\omega}(K_n)$ is the sum of the signs of the eigenvalues we have
 $$
 \sigma_{\omega}(K_n)-\sigma_{\omega}(K_0)=\sign(\mu_1(\omega,n))+\sign(\mu_2(\omega,n)).
 $$
 Therefore $\sigma_{\omega}(K_n)=\sigma_{\omega}(K_0)$ if $\mu_1(\omega,n)$ and $\mu_2(\omega,n)$ have different signs,
 and $\sigma_\omega(K_n) = \sigma_\omega(K_0) \pm 2$ if $\mu_1(\omega,n)$ and $\mu_2(\omega,n)$ have the same sign.
 We conclude then by showing that $F(\omega,n)=\mu_1(\omega,n)\cdot\mu_2(\omega,n)$.
 
 We have
 
 \begin{align*}
  \det((1-\omega)S_n+(1-\overline{\omega})S_n^T)&=(1-\omega)^{14}\det(S_n+\frac{1-\overline{\omega}}{1-\omega}S_n^T)\\
  &=(1-\omega)^{14}\det(S_n-\overline{\omega}S_n^T)\\
  &=\overline{\omega}^7(1-\omega)^{14}\det(\overline{\omega}^{-\frac{1}{2}}S_n-\overline{\omega}^{\frac{1}{2}}S_n^T)\\
    &=\overline{\omega}^7(1-\omega)^{14}\det(\omega^{\frac{1}{2}}S_n-\omega^{-\frac{1}{2}}S_n^T)\\
    &=\overline{\omega}^7(1-\omega)^{14}\Delta_{K_n}(\omega^{\frac{1}{2}}).
 \end{align*}
 Similarly we get $\det((1-\omega)M+(1-\overline{\omega})M^T)=\overline{\omega}^6(1-\omega)^{12}\Delta_{K_0}(\omega^{\frac{1}{2}})$. Now the quantity
 $\mu_1(\omega,n)\cdot\mu_2(\omega,n)$ is the quotient of these two determinants and so we obtain
\begin{equation}\label{eq1}
 \mu_1(\omega,n)\cdot\mu_2(\omega,n)=\overline{\omega}(1-\omega)^2\frac{\Delta_{K_n}(\omega^{\frac{1}{2}})}{\Delta_{K_0}(\omega^{\frac{1}{2}})}.
\end{equation}

Here we use the skein relation as follows. In Figure \ref{ourknotskein} we have chosen a crossing so that $L_{-}$ (pictured) corresponds to $K_{n-1}$, $L_{+}$ corresponds to $K_n$,
and $L_0$ corresponds to a 2-component link $L$ that does not depend on $n$.

Applying the skein relation recursively we see that $\Delta_{K_n}(\omega^{\frac{1}{2}})=\Delta_{K_0}(\omega^{\frac{1}{2}})+n(\omega^{-\frac{1}{2}}-\omega^{\frac{1}{2}})\Delta_{L}(\omega^{\frac{1}{2}})$,
and furthermore $(\omega^{-\frac{1}{2}}-\omega^{\frac{1}{2}})\Delta_{L}(\omega^{\frac{1}{2}})=\Delta_{K_1}(\omega^{\frac{1}{2}})-\Delta_{K_0}(\omega^{\frac{1}{2}})$. Using
these two identities, from \eqref{eq1} we obtain
\begin{align*}
 \mu_1(\omega,n)\cdot\mu_2(\omega,n)&=\overline{\omega}(1-\omega)^2\left(1-n+n\frac{\Delta_{K_1}(\omega^{\frac{1}{2}})}{\Delta_{K_0}(\omega^{\frac{1}{2}})}\right)\\
 &=2(\textrm{Re}(\omega)-1)\left(1-n+n\frac{\Delta_{K_1}(\omega^{\frac{1}{2}})}{\Delta_{K_0}(\omega^{\frac{1}{2}})}\right)\\
 &=F(\omega,n).\\
\end{align*}
\end{proof}

\begin{figure}
\includegraphics[scale=3.0]{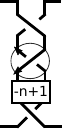}
\centering
 \caption{The chosen crossing for the skein relation.}
\label{ourknotskein}
\end{figure}

\begin{lemma}\label{lemma2}
 There exists $l_0>0$ and a sequence $\{\omega_l\}\subset S^1$ converging to $-1$ such that for every $l>l_0$ the following conditions are satisfied:
 \begin{itemize}
  \item $\sigma_{\omega_l}(K_m) = \sigma_{\omega_l}(K_0)\pm 2$ for all $m \geq l$;
   \item $\sigma_{\omega_l}(K_m) = \sigma_{\omega_l}(K_0)$ for all $l_0< m < l$.
 \end{itemize}
\end{lemma}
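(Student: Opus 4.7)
The strategy is to translate the two conditions in Lemma \ref{lemma2} into sign conditions on the function $F(\omega_l,m)$ of Lemma \ref{lemma1} and then to locate $\omega_l$ near $-1$ exploiting a zero of the relevant auxiliary function at $\omega=-1$. Define
\[
h(\omega) \;:=\; 1 - \frac{\Delta_{K_1}(\omega^{1/2})}{\Delta_{K_0}(\omega^{1/2})},
\]
so that $F(\omega,n) = 2(\mathrm{Re}(\omega)-1)\bigl(1 - n\,h(\omega)\bigr)$. For any $\omega \in S^1 \setminus \{1\}$ the prefactor $\mathrm{Re}(\omega)-1$ is negative, so Lemma \ref{lemma1} reads: $\sigma_\omega(K_m) = \sigma_\omega(K_0)$ exactly when $m\,h(\omega) < 1$, and $\sigma_\omega(K_m) = \sigma_\omega(K_0) \pm 2$ exactly when $m\,h(\omega) > 1$. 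It therefore suffices to produce $\omega_l \to -1$ and $l_0$ such that
\[
(l-1)\, h(\omega_l) \;<\; 1 \;<\; l\, h(\omega_l) \qquad \text{for every } l > l_0,
\]
since then $m \geq l$ forces $m\,h(\omega_l) > 1$ while $l_0 < m < l$ forces $m\,h(\omega_l) < 1$.

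The next step is to verify that $h(-1) = 0$ and $h(\omega) > 0$ along an arc of $S^1$ accumulating at $-1$. The equality $h(-1)=0$ amounts to $\Delta_{K_1}(i) = \Delta_{K_0}(i)$; both values are units in $\{\pm 1\}$ because $K_0 = T_{3,7}$ has determinant one, and $K_1$ has determinant one as well (its double branched cover $E_3$ is an integer homology sphere, by Proposition \ref{covers} and the Dehn surgery formula). Using the skein recursion $\Delta_{K_1}(x) - \Delta_{K_0}(x) = (x^{-1}-x)\Delta_L(x)$ established in the proof of Lemma \ref{lemma1}, matching of these signs reduces to the single calculation $\Delta_L(i) = 0$ for the two-component link $L$ of Figure \ref{ourknotskein}, which can be checked from an explicit Seifert matrix of $L$. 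For the sign of $h$ near $-1$: the symmetry $\Delta_K(\omega^{1/2}) = \Delta_K(\bar\omega^{1/2})$ (coming from $\Delta_K(x) = \Delta_K(x^{-1})$) makes $h$ an even function of the angular parameter at $\theta=\pi$, so $\omega=-1$ is automatically a critical point and the local sign of $h$ is determined by the second derivative, which I would compute symbolically from the two polynomials or simply verify numerically at one test point $\omega_0$ very close to $-1$.

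Granting these two properties of $h$, the rest is a soft continuity argument. On the positive arc the function $1/h(\omega)$ is continuous and tends to $+\infty$ as $\omega \to -1$, so the intermediate value theorem furnishes, for each sufficiently large $l$, a point $\omega_l$ with $h(\omega_l) \in \bigl(1/l,\, 1/(l-1)\bigr)$; one may arrange $\omega_l \to -1$ and, by a small perturbation, avoid the finitely many zeros of $\Delta_{K_0}(\omega^{1/2})$ on $S^1$. Setting $l_0$ to be the least index for which this construction succeeds, the two bullets follow at once from the sign reduction in the first paragraph. The main obstacle is the middle step: pinning down $\Delta_L(i)=0$ and the local sign of $h$ at $\omega=-1$ requires specific combinatorial and computational input from the family $\{K_n\}$, and this is the only place where the soft continuity framework must be replaced by a concrete calculation.
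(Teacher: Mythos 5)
Your proposal is correct and follows essentially the same route as the paper: the authors also reduce both bullets to the sign of $F(\omega,m)$, study $g(t)=\Delta_{K_1}(e^{it/2})/\Delta_{K_0}(e^{it/2})$ (your $h=1-g$) near $t=\pi$, verify $g(\pi)=1$, $g'(\pi)=0$, $g''(\pi)<0$ by explicit computation of the two Alexander polynomials, and then interleave the $\omega_l$ so that $mh(\omega_l)>1$ exactly when $m\ge l$ (they use monotonicity of $g$ on $[\pi-\varepsilon,\pi]$ where you use the intermediate value theorem, an immaterial difference). The computational step you flag as the main obstacle is indeed carried out in the paper by writing down $\Delta_{K_0}$ and $\Delta_{K_1}$ from the Seifert matrices rather than via your determinant-plus-$\Delta_L(i)=0$ shortcut, but both amount to the same finite verification.
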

\begin{proof}
 Consider the function $g(t) = \frac{\Delta_{K_1}(e^\frac{it}{2})}{\Delta_{K_0}(e^\frac{it}{2})}$. 
 Explicit computations using the Seifert forms introduced earlier show that
 {\tiny
 \begin{align*}
\Delta_{K_0}(x)&=x^{12}-x^{10}+x^6-x^4+1-x^{-4}+x^{-6}-x^{-10}+x^{-12};\\
  \Delta_{K_1}(x)&=2x^{12}-3x^{10}+x^{8}+2x^6-3x^4+x^2+1+x^{-2}-3x^{-4}+2x^{-6}+x^{-8}-3x^{-10}+2x^{-12}.
 \end{align*}}
 Substituting $x=e^{\frac{it}{2}}$ and simplifying we get
 $$
 g(t)=\frac{4\cos 6t-6\cos 5t + 2\cos 4t +4\cos 3t -6\cos 2t +2\cos t + 1}{2\cos 6t -2\cos 5t +2\cos 3t -2\cos 2t +1}.
 $$
 One can check that $g(\pi)=1$, $g'(\pi)=0$, and $g''(\pi)<0$.
 It follows that there exists an $\varepsilon >0$ such that $g(t)$ is continuous and monotonically increasing on the interval $[\pi-\varepsilon, \pi]$.
 Therefore there exists an $l_0$ and a sequence $\{t_l\}_{l>l_0}\subset[\pi-\varepsilon, \pi]$ such that $g(t_l)=1-\frac{1}{l}$. 
 For each $l>l_0$ we choose $s_l$
 such that $t_{l-1}<s_l<t_l$ and define $\omega_l:=e^{is_l}$. Now assume that $m\geq l$, we have
 $$
 F(\omega_l,m)=2(\textrm{Re}(\omega_l)-1)(1-m+mg(s_l))>0
 $$
 since $g(s_l)\leq g(s_m)<g(t_m)=1-\frac{1}{m}$. By Lemma \ref{lemma1}, $\sigma_{\omega_l}(K_m) = \sigma_{\omega_l}(K_0)\pm 2$. Similarly if $l_0<m< l$ then $F(\omega_l,m)<0$ and,
 again by Lemma \ref{lemma1}, we conclude that $\sigma_{\omega_l}(K_m) = \sigma_{\omega_l}(K_0)$.
 

\end{proof}
Observe that we can choose the $\omega_l$ in Lemma \ref{lemma2} such that the Seifert forms 
$(1-\omega_l)S_m+(1-\overline{\omega}_l)S_m^T$ are nonsingular, and therefore we do not have to deal with one-sided limits
as in the definition of $\sigma_{\omega}$.
Now we are ready to prove Theorem \ref{thm:thm1}.
\begin{proof}[Proof of Theorem \ref{thm:thm1}]
We will show that the set $\{K_l\}_{l>l_0}$ is linearly independent in $\mathcal{C}$, where $l_0$ is the positive integer defined in Lemma \ref{lemma2}.
 Assume, by contradiction, that there exist nonzero $\alpha_1,\dots,\alpha_L$ such that the knot $\sharp_{i=1}^L \alpha_i K_{l_i}$ is slice, where each $l_i > l_0$ and are ordered such 
 that $l_i>l_{i-1}$. Recall that for torus knots the signatures $\sigma_{\omega}$ can be described explicitly (\cite{Lith}). In particular
 $\sigma_{\omega_l}(K_0)\neq 0$ for $l$ sufficiently large. Choose $h>l_{L}$ such that $\sigma_{\omega_{h}}(K_0)\neq 0$.
 By Lemma \ref{lemma2} we have
 $$
 0=\sigma_{\omega_{h}}(\sharp_{i=1}^L \alpha_i K_{l_i})=\sigma_{\omega_{h}}(K_0)\sum_{i=1}^L\alpha_i.
 $$
  It follows that $\alpha_1+\dots+\alpha_L=0$.
 Now set $k=l_{L}$. Again, by Lemma \ref{lemma2}, we have
 $$
 0=\sigma_{\omega_{k}}(\sharp_{i=1}^L \alpha_i K_{l_i})=\sigma_{\omega_k}(K_{l_L})\alpha_L+\sigma_{\omega_k}(K_0)\sum_{i=1}^{L-1}\alpha_i.
 $$
 The two equalities imply $\sigma_{\omega_k}(K_{l_L})=\sigma_{\omega_k}(K_0)$ which contradicts Lemma \ref{lemma2}.
\end{proof}

Theorem \ref{thm:thm1} together with Propositions \ref{balls} and \ref{covers} shows that $\ker(\beta)$ contains a free and  infinitely generated subgroup.
In order to get a $\mathbb{Z}^{\infty}$ \emph{summand} we need a slightly different family of knots.
\begin{proof}[Proof of Corollary \ref{thm:thm2}]
 Consider the family of homomorphisms $\{\frac{1}{2}\sigma_{\omega_l}\}_{l>l_0}$ as in Lemma \ref{lemma2}. This gives a homomorphism
 $$
 \overline{\sigma}:\ker(\beta)\longrightarrow \Z^{\infty} \ \ \ ,\ \ \ \ K\mapsto (\frac{1}{2}\sigma_{\omega_{l_0+1}}(K),\ldots,\frac{1}{2}\sigma_{\omega_{l_0+k}}(K),\ldots).
 $$
 In order to conclude the proof it suffices to show that $\overline{\sigma}$ is surjective. It would then follow that we have a short exact sequence 
 $0 \rightarrow \ker \overline{\sigma} \rightarrow \ker \beta \rightarrow \mathbb{Z}^\infty \rightarrow 0$, and since $\mathbb{Z}^\infty$ is free abelian the sequence splits. 
 For this purpose, consider the knots $J_n:=K_n-K_0$ with $n>l_0$. By Lemma \ref{lemma2}, we have 
 \begin{align*}
  \overline{\sigma}(J_n)&=\frac{1}{2}(\sigma_{\omega_{l_0+1}}(K_n)-\sigma_{\omega_{l_0+1}}(K_0),\dots,\sigma_{\omega_{l_0+k}}(K_n)-\sigma_{\omega_{l_0+k}}(K_0),\dots)\\
  &=\pm(\overbrace{1,\dots,1}^{n-l_0},0,0,\dots).
 \end{align*}
 Here we use the fact that $\sigma_{\omega_l}(K_n) = \sigma_{\omega_l}(K_0)\pm 2$ for all $l\leq n$, and that the sign of the $\pm2$ is constant for fixed $n$.
 Hence we can arrange that 
 $\overline{\sigma}(J_{n}\pm J_{n-1})$ maps to $\pm e_{n-l_0}$, where $e_{n-l_0}$ is the $(n-l_0)$-th element of the standard basis of $\Z^{\infty}$, and this concludes the proof.
\end{proof}

\end{section}

\bibliographystyle{amsalpha}
\bibliography{knotconcordanceandhomologyspheregroups.bib}

\end{document}